\documentclass[a4paper,12pt]{article}
\usepackage[utf8]{inputenc}
\usepackage{a4wide}
\usepackage{latexsym,amsfonts,amsmath,amssymb,mathrsfs,url,amsthm}
\usepackage{mathtools}
\mathtoolsset{showonlyrefs}
\usepackage{color,xcolor,graphicx}
\colorlet{RED}{red}
\usepackage{mathdots}
\usepackage{cancel}
\usepackage{enumitem}
\usepackage{hyperref}

\newtheorem{theorem}{Theorem}[section]
\newtheorem{lemma}[theorem]{Lemma}

\newtheorem{proposition}[theorem]{Proposition}
\newtheorem{definition}[theorem]{Definition}
\newtheorem{corollary}[theorem]{Corollary}

\providecommand{\keywords}[1]
{
  \noindent \small	
  \textbf{Keywords:} #1
}

\newcommand{\bsx}{\boldsymbol{x}}

\newcommand{\bszero}{\boldsymbol{0}}
\newcommand{\bsone}{\boldsymbol{1}}

\newcommand{\NN}{\mathbb{N}}
\newcommand{\RR}{\mathbb{R}}

\allowdisplaybreaks

\newcommand{\Ftwo}{\mathbb{F}_2}
\newcommand{\slice}[2]{[#1 \!:\! #2]}

\title{Exact $\ell^\infty$-separation radius of Sobol' sequences in dimension 2
\thanks{
The work of K.~S. is supported by JSPS KAKENHI Grant Number 24K06857.}}
\author{
Kosuke Suzuki\thanks{Faculty of Science, Yamagata University, 1-4-12 Kojirakawa-machi, Yamagata, 990-8560, Japan (\url{kosuke-suzuki@sci.kj.yamagata-u.ac.jp})}
}
\date{\today}

\begin{document}



\date{\today}

\maketitle

\begin{abstract}
Quasi-uniformity is a fundamental geometric property of point sets, crucial for applications such as kernel interpolation, Gaussian process regression, and space-filling experimental designs.
While quasi-Monte Carlo methods are widely recognized for their low-discrepancy characteristics, understanding their quasi-uniformity remains important for practical applications.
For the two-dimensional Sobol' sequence, Sobol' and Shukhman (2007) conjectured that the separation radius of the first $N$ points achieves the optimal rate $N^{-1/2}$, which would imply quasi-uniformity.
This conjecture was disproved by Goda (2024), who computed exact values of the $\ell^2$-separation radius for a sparse subsequence $N = 2^{2^v-1}$.
However, the general behavior of the Sobol' sequence for arbitrary $N$ remained unclear.
In this paper, we derive exact expressions for the $\ell^\infty$-separation radius of the first $N = 2^m$ points of the two-dimensional Sobol' sequence for all $m \in \mathbb{N}$.
As an immediate consequence, we show that the separation radius of Sobol' points is  $O(N^{-3/4})$, which is strictly worse than the optimal rate $N^{-1/2}$,
revealing that the two-dimensional Sobol' sequence has a suboptimal mesh ratio that grows at least as $N^{1/4}$.
\end{abstract}

\keywords{Quasi-Monte Carlo, Sobol' sequence, space-filling design, covering radius, separation radius, mesh ratio}

\section{Introduction}
Quasi-Monte Carlo (QMC) methods replace random sampling with carefully constructed deterministic point sets for numerical integration over the unit cube; see, e.g., \cite{DKP22,DKS13,DP10,LePi14,N92,SJ94}.
While classical QMC theory focuses on \emph{discrepancy}, which measures deviation from perfect equidistribution,
it does not directly control local spacing between points.
However, many applications require a more geometric notion of uniformity.
In particular, tasks such as scattered data approximation \cite{W05}, Gaussian process regression \cite{T20}, kernel interpolation \cite{Sch95}, and the design of computer experiments \cite{FLS06, PM12, SWN03} benefit from point sets that ensure both well-controlled spacing and coverage. These properties are summarized by the term \emph{quasi-uniformity}.

To formalize quasi-uniformity, two standard geometric parameters are commonly used for a finite point set $Q \subset [0,1]^d$: the \emph{covering radius} $h_p(Q)$ and the \emph{separation radius} $q_p(Q)$.
Specifically, for the $\ell^p$ norm $\|\cdot\|_p$, define
\[
h_p(Q)\ :=\sup_{x\in[0,1]^d}\min_{y\in Q}\|x-y\|_p,
\qquad
q_p(Q)\ :=\dfrac{1}{2} \min_{\substack{x,y\in Q\\x\neq y}}\|x-y\|_p.
\]
Intuitively, $h_p(Q)$ is the smallest radius such that closed $\ell^p$ balls around each point cover $[0,1]^d$, whereas $q_p(Q)$ is the largest radius such that the corresponding open balls around the points do not overlap.
As noted in \cite[Section~6]{N92}, \cite{SS07}, and \cite{PZ23}, this geometric interpretation implies
\begin{equation}\label{eq:ratio}
h_p(Q) \in \Omega(|Q|^{-1/d}), \qquad q_p(Q) \in O(|Q|^{-1/d}).
\footnote{We use the standard asymptotic notations. For two non-negative functions $f,g$,
$f(N)\in \Omega(g(N))$ if there exists a constant $c>0$ such that $f(N) \ge c g(N)$ for all sufficiently large $N$,
$f(N)\in O(g(N))$ if $f(N) \le c g(N)$ for all sufficiently large $N$,
and $f(N) \in \Theta(g(N))$ if $f(N) \in \Omega(g(N))$ and $f(N) \in O(g(N))$.}
\end{equation}
The \emph{mesh ratio}
\[\rho_p(Q) := \dfrac{h_p(Q)}{q_p(Q)}\]
quantifies how close $Q$ is to an ideal packing/covering configuration.
A sequence $(\bsx_n)_{n\ge 0}$ is \emph{quasi-uniform in $\ell_p$} 
if the mesh ratio $\rho_p(Q_N)$, for $Q_N = \{\bsx_0,\dots,\bsx_{N-1}\}$, 
is bounded independently of $N$.
Equivalently, by \eqref{eq:ratio}, both $h_p(Q_N)$ and $q_p(Q_N)$ are $\Theta(N^{-1/d})$.

Given that QMC point sets are natural candidates for generating uniform point sets,
it is important to ask whether they are quasi-uniform.
This question is non-trivial, since quasi-uniformity does not necessarily imply low discrepancy, nor does low discrepancy imply quasi-uniformity for $d \ge 2$ \cite[Section~1.3]{DGLPSxx}.
As noted in \cite{WBG21}, this issue remained open for a long time for classical QMC constructions.
This situation changed with Goda's influential result \cite{G24a}, which shows that the two-dimensional Sobol' sequence is not quasi-uniform. 

The Sobol' sequence \cite{So67} is widely used in practice
due to its efficient digital construction, extensibility to arbitrary sample sizes, and effectiveness in high-dimensional integration.
It is available in standard software packages, such as Python's QMCPy \cite{QMCpy} and MATLAB's Statistics and Machine Learning Toolbox. 
Sobol' and Shukhman \cite{SS07} conjectured that the separation radius of the first $N$ points of the $d$-dimensional Sobol' sequence behaves like $N^{-1/d}$, which would imply quasi-uniformity. 
For $d=2$, Goda \cite{G24a} showed that this conjecture fails for the sparse subsequence of the form $N = 2^{2^w-1}$, and the case $N = 2^{2^w}$ was subsequently analyzed in \cite{DGSxx}. 
Nevertheless, a complete characterization of the separation radius in the primary case $N=2^m$ remained open,
and closing this gap is the aim of the present work.

\medskip
\noindent\textbf{Main results.}
Let $Q_N$ denote the first $N$ points of the two-dimensional Sobol' sequence (Definition~\ref{def:Sobol}).
Our main contributions are as follows:
\begin{itemize}
\item 
We provide an exact formula for the $\ell^\infty$-separation radius of $Q_{2^m}$ for all $m$.
\item
We prove that $q_{\infty}(Q_N) \in O(N^{-3/4})$ and $\rho_{\infty}(Q_N) \in \Omega(N^{1/4})$.
\end{itemize}
These results are summarized in the following theorem and its corollaries.

\begin{theorem}\label{thm:main}
Let $m \in \NN$ and $N=2^m$.
Let $Q_{N}$ be the first $N$ points of the two-dimensional Sobol' sequence.
If $m=2^v$ or $2^v-1$ for some $v \in \NN$, then we have
\[
q_{\infty}(Q_N) = 2^{-m-1}.
\]
Otherwise, decomposing $m$ as
$m = 2^{v} + 2^{w} + c$ with integers $v > w \ge 0$ and $0 \le c < 2^w$
(note that the case $(w,c)=(v-1,2^{v-1}-1)$ is excluded since this falls into the first case as $m=2^{v+1}-1$),
then
\[
q_{\infty}(Q_N) = 2^{-{2^{v}}-2^{w}}.
\]
\end{theorem}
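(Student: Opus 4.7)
I would first rescale to integer coordinates: writing $y_n^{(j)} := 2^m \cdot x_n^{(j)} \in \{0,1,\dots,2^m-1\}$, one has $q_\infty(Q_{2^m}) = D_m / 2^{m+1}$, where $D_m := \min_{n \neq n'} \|y_n - y_{n'}\|_\infty$. Under the standard digital construction, $y_n^{(1)}$ is the bit-reversal of $n$ (coming from $C_1 = I$) while $y_n^{(2)}$ is the integer read off from $P \vec n$, where $P$ is the Pascal matrix with entries $\binom{j}{i} \bmod 2$. For any pair $n \neq n'$, let $k = n \oplus n'$, $K = \operatorname{supp}(k)$, $L = \operatorname{supp}(P \vec k)$, $\vec a = \vec n$ and $\vec b = P\vec a$. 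A short digital calculation yields the signed-sum decomposition
\[
\Delta^{(1)} := y_n^{(1)} - y_{n'}^{(1)} = \sum_{i \in K}(2a_i - 1)\,2^{m-1-i}, \qquad
\Delta^{(2)} := y_n^{(2)} - y_{n'}^{(2)} = \sum_{i \in L}(2b_i - 1)\,2^{m-1-i}.
\]
By Lucas's theorem, $i \in L$ iff an odd number of $j \in K$ contain $i$ as a bit-submask, giving in particular $\max L = \max K$.

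For the \textbf{upper bound} I would exhibit an explicit pair in each case. In the special cases $m \in \{2^v - 1, 2^v\}$, take $k = 2^m - 1$. The identity $\sum_{j=0}^{m-1} \binom{j}{i} = \binom{m}{i+1}$ combined with Lucas shows that $L = \{m-1\}$ when $m = 2^v$ and $L = \{0,1,\dots,m-1\}$ when $m = 2^v - 1$; in each sub-case, a concrete $\vec a$ yields $|\Delta^{(1)}| = |\Delta^{(2)}| = 1$. For the general case $m = 2^v + 2^w + c$, take $K = \{2^v - 1, 2^v, \dots, 2^v + 2^w - 2\}$ (a block of $2^w$ consecutive positions). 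A Lucas calculation then gives the contiguous $L = \{2^w - 1, 2^w, \dots, 2^v + 2^w - 2\}$, both sets having maximum $m - c - 2$. Cancelling lower terms against the leading term in each signed sum produces $|\Delta^{(1)}| = |\Delta^{(2)}| = 2^{c+1}$, which I would verify by using the triangular structure of $P$ (with $1$'s on the diagonal) to choose the $a_i$ for $i \notin K$ so as to realise a compatible sign pattern in both coordinates.

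For the \textbf{lower bound} I would show $D_m \ge 2^{c+1}$ (respectively $D_m \ge 1$). The baseline observation is that a non-zero signed sum of distinct powers of two is divisible by its lowest power, so both $|\Delta^{(1)}|$ and $|\Delta^{(2)}|$ are at least $2^{m-1-\max K}$ whenever they are non-zero (using $\max L = \max K$). This immediately handles every $k$ with $\max K \le m - c - 2$. The substantive case is $\max K \ge m - c - 1$, where $\Delta^{(1)}$ could in principle be as small as $1$: I would argue that either (a) the Lucas-determined set $L$ fails to form a contiguous chain ending at $\max K$, so $|\Delta^{(2)}| \ge 2^{c+1}$ regardless of signs, or (b) $L$ is such a chain but the diagonal identity $b_{\max K} = a_{\max K}$ from $P$ couples the top signs of the two sums, preventing simultaneous cancellation below $2^{c+1}$. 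The main obstacle is this case analysis for $K$ with bits strictly above position $m - c - 1$: one must track via Lucas exactly which sub-patterns of $K$ produce an $L$ of the required ``contiguous tail'' shape and then verify the sign-compatibility obstruction, which I expect to do by splitting $K$ into a ``high part'' (indices $\ge m - c - 1$) and a ``low part,'' showing that only the upper-bound configuration realises the optimum in both coordinates simultaneously.
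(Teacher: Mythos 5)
Your setup and your upper bound are essentially sound: the signed-sum decomposition, the Lucas characterization of $L=\supp(P\vec k)$ with $\max L=\max K$, and the choice $K=\{2^v-1,\dots,2^v+2^w-2\}$ all check out. (Your witness is, up to the involution $P^2=I$, the coordinate-swap of the paper's explicit pair $p=2^{V+W-1}+2^{W-1}$, $q=2^{V+W}-2^{W}$, so a compatible $\vec a$ indeed exists.) The baseline reduction of the lower bound to $\max K\ge m-c-1$ is also correct. The genuine gap is in the ``substantive case'' of the lower bound, on three counts. First, alternative (a) is false as stated: if $L$ has no elements below position $m-c-1$ (e.g.\ $L=\{m-c-1,\,m-1\}$), then $L$ is not a contiguous chain and yet $|\Delta^{(2)}|\le 2^{c}+1<2^{c+1}$. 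The correct structural statement (the paper's Corollary~3.3) constrains only $L\cap\{0,\dots,m-c-2\}$ to be empty, full, or a contiguous upper block, and the empty case needs separate treatment. Second, the ``diagonal identity $b_{\max K}=a_{\max K}$'' invoked in (b) is not true: $b_i=\sum_{j\ge i}\binom{j}{i}a_j$ depends on all higher bits of $n$. Sign-coupling identities of this type hold only at the special positions $2^v-1$ and $2^v+2^w-1$ (0-indexed), where Lucas plus the bound $m\le 2^{v+1}-2$ force the corresponding row of $P$ to have a single $1$ in the admissible column range; these positions, not $\max K$, are where the paper couples the two sums (its Lemma~3.1(iii)--(iv) and Cases 2-2, 3-2).

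Third, and most seriously, no argument that tracks only $K$, $L$ and signs can dispose of the case where both $K$ and $L$ avoid $\{0,\dots,2^v-1\}$: there the two points lie in a common dyadic box of sidelength $2^{-2^v}$, and excluding this requires the $(0,m,2)$-net property of the Sobol' pair (equivalently, nonsingularity of the matrix obtained by stacking initial rows of $I$ and of $P$) --- this is the paper's Case~1 and is invisible to your framework. A further missing algebraic input is the injectivity step (the paper's Cases 2-1/3-1): when the contiguity forces $(P\vec\Delta)$ to be all-ones on a block, one must show that a certain $(2^w-1)\times c$ submatrix of $P$ has linearly independent columns in order to conclude that the high bits of $\vec\Delta$ vanish, contradicting $\vec\Delta\slice{V+W}{m}\neq\bszero$. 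So the frame is workable, but the three ingredients you have not supplied (the net property, the special-row identities at $2^v$ and $2^v+2^w$, and the column-independence step) constitute the actual substance of the lower bound rather than routine bookkeeping.
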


The following corollaries show that the separation radius of the two-dimensional Sobol' sequence 
decays as $O(N^{-3/4})$, which is a factor $N^{-1/4}$ smaller than the optimal order $\Theta(N^{-1/2})$. 
As a consequence, the Sobol' sequence has a suboptimal mesh ratio for all $N$. 
Since all $\ell^p$ norms on $\RR^2$ are equivalent, the asymptotic orders stated below remain valid for any $p \in [1,\infty]$.

\begin{corollary}\label{cor:net}
For any $m \in \NN$, we have
\begin{align}
q_{\infty}(Q_{2^m}) &\le 2^{-3m/4-5/4}, \label{eq:cor-net-general}\\
q_{\infty}(Q_{2^m}) &\le 2^{-3m/4-3/2} \quad \text{for } m \neq 1,5, \label{eq:cor-net-strong}
\end{align}
with equality in the second inequality if $m = 2^v-2$ for some $v \ge 2$.
\end{corollary}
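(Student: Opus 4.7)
The plan is to substitute the formulas from Theorem \ref{thm:main} and verify the inequalities by comparing exponents. Writing $q_\infty(Q_{2^m}) = 2^{-A(m)}$, Theorem \ref{thm:main} gives $A(m) = m+1$ when $m \in \{2^v, 2^v - 1\}$ (Case 1), and $A(m) = 2^v + 2^w$ when $m = 2^v + 2^w + c$ with $v > w \ge 0$ and $0 \le c < 2^w$ (Case 2). Both target bounds rewrite as $A(m) \ge 3m/4 + \alpha$ with $\alpha = 5/4$ for \eqref{eq:cor-net-general} and $\alpha = 3/2$ for \eqref{eq:cor-net-strong}, so the corollary reduces to elementary estimates on the decomposition parameters.

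For \eqref{eq:cor-net-general}, Case 1 immediately yields $A(m) - 3m/4 = m/4 + 1 \ge 5/4$ since $m \ge 1$. In Case 2, clearing denominators rewrites the goal as $2^v + 2^w - 3c \ge 5$. I would split further by whether the top two bits of $m$ are adjacent. When $v \ge w + 2$, the bound $2^v + 2^w \ge 5\cdot 2^w$ together with $c \le 2^w - 1$ delivers a surplus of at least $2\cdot 2^w + 3 \ge 5$. When $v = w + 1$, the exclusion of $m = 2^u - 1$ from Case 2 forces $c \le 2^w - 2$, so $3\cdot 2^w - 3c \ge 6$.

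For the sharper bound \eqref{eq:cor-net-strong}, the same skeleton works with target $2^v + 2^w - 3c \ge 6$, and the failures precisely identify the exceptions. Case 1 requires $m \ge 2$, excluding $m = 1$. In Case 2 with $v \ge w + 2$, the surplus $2\cdot 2^w + 3$ drops to $5$ only when $w = 0$ and $v = 2$, which forces $c = 0$ and $m = 5$; every other configuration yields surplus at least $7$. In Case 2 with $v = w+1$, the bound $3\cdot 2^w - 3c \ge 6$ holds as above. For the equality statement, the condition $3\cdot 2^w - 3c = 6$ holds iff $c = 2^w - 2$ (requiring $w \ge 1$), giving $m = 2^{w+2} - 2$; Case 1 equality $m/4 + 1 = 3/2$ yields only $m = 2$. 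Together these cover exactly $m = 2^v - 2$ for $v \ge 2$.

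The subtle point, and the step I expect to need the most care, is the overlap between the two cases of Theorem \ref{thm:main}. The value $m = 2^u - 1$ ($u \ge 2$) admits a Case 2-style decomposition $v = u-1$, $w = u-2$, $c = 2^w - 1$, sitting at the extreme of sub-case $v = w+1$, but the theorem routes these $m$ through Case 1. Consequently Case 2 never sees $c = 2^w - 1$ when $v = w+1$, and this exclusion is exactly what buys the extra unit needed to upgrade the constant from $5/4$ to $3/2$ in that sub-case.
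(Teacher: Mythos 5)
Your proof is correct and follows essentially the same route as the paper: substitute the exact values from Theorem~\ref{thm:main}, split the non-trivial case according to whether $w\le v-2$ or $w=v-1$, use $c\le 2^w-1$ (respectively $c\le 2^w-2$, since otherwise $m+1$ would be a power of two), and identify $m=2^v-2$ as the equality case. Your integer reformulation $2^v+2^w-3c\ge 5$ (resp.\ $\ge 6$) is just the paper's $\log_2$ computation with denominators cleared, and your closing remark about why $c=2^w-1$ cannot occur when $v=w+1$ matches the paper's justification exactly.
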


\begin{corollary}\label{cor:limsup}
For any integer $N \ge 2$, the following bounds hold:
\begin{align}
q_\infty(Q_N) &\le C_1 N^{-3/4}, \label{eq:limsup}\\
\rho_\infty(Q_N) &\ge C_2 N^{1/4}, \label{eq:meshratio}
\end{align}
where $C_1 = C_2 = 2^{-1/2}$. 
Moreover, for $N \ge 64$, the constants can be improved to $C_1 = 2^{-3/4}$ and $C_2 = 2^{-1/4}$.
\end{corollary}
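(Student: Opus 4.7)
The plan is to reduce the general-$N$ statement to the dyadic case handled by Corollary~\ref{cor:net}, via the standard monotonicity of the separation radius along a sequence. Given $N \ge 2$, I would set $m := \lfloor \log_2 N \rfloor$, so that $2^m \le N < 2^{m+1}$. Since $Q_{2^m} \subset Q_N$ and enlarging a finite point set can only shrink its minimum interpoint distance, $q_\infty$ is monotone nonincreasing in $N$; in particular $q_\infty(Q_N) \le q_\infty(Q_{2^m})$. Applying \eqref{eq:cor-net-general} and using $2^m > N/2$ to convert the dyadic rate into a bound in $N$ then gives
\[
q_\infty(Q_N) \;\le\; 2^{-3m/4 - 5/4} \;<\; 2^{3/4 - 5/4}\, N^{-3/4} \;=\; 2^{-1/2}\, N^{-3/4},
\]
which is \eqref{eq:limsup} with $C_1 = 2^{-1/2}$. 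For the sharper constant I would observe that $N \ge 64$ forces $m \ge 6$, so in particular $m \notin \{1,5\}$ and the stronger bound \eqref{eq:cor-net-strong} is available; running the same manipulation with $-3m/4-3/2$ in place of $-3m/4-5/4$ upgrades the constant to $C_1 = 2^{-3/4}$.

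The mesh ratio bound then follows by combining this with the universal lower bound on the covering radius underlying \eqref{eq:ratio}. Since the closed $\ell^\infty$-balls of radius $h_\infty(Q_N)$ centered at the $N$ points must cover $[0,1]^2$, comparing areas gives $N\cdot(2h_\infty(Q_N))^2 \ge 1$, hence $h_\infty(Q_N) \ge \tfrac{1}{2}N^{-1/2}$. Dividing by the upper bound on $q_\infty$ just established yields
\[
\rho_\infty(Q_N) \;\ge\; \frac{N^{-1/2}/2}{C_1\, N^{-3/4}} \;=\; \frac{1}{2C_1}\, N^{1/4},
\]
which evaluates to $2^{-1/2} N^{1/4}$ in general and $2^{-1/4} N^{1/4}$ for $N \ge 64$, as required.

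No serious obstacle is anticipated: the corollary is essentially bookkeeping on top of Corollary~\ref{cor:net}. The only points requiring care are (a) the monotonicity step, which relies on $Q_{2^m}$ being a prefix of $Q_N$ and therefore uses in an essential way that we are working with a \emph{sequence} rather than an arbitrary net, and (b) the threshold $N = 64$, chosen precisely to eliminate the exceptional case $m = 5$ of Corollary~\ref{cor:net} (the case $m = 1$ being automatic from $N \ge 4$).
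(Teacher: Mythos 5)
Your proposal is correct and follows essentially the same route as the paper: reduce to the dyadic case via monotonicity of $q_\infty$ along the sequence, apply Corollary~\ref{cor:net} with $2^m > N/2$, and divide the covering-radius lower bound $h_\infty(Q_N) \ge \tfrac{1}{2}N^{-1/2}$ by the separation bound. The only cosmetic difference is that you derive that covering-radius bound by the area-comparison argument, whereas the paper cites it from \cite[Remark~2.4]{DGSxx}.
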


Sobol' sequences are most commonly and effectively used when the number of points is a power of two ($N=2^m$).
Therefore, our results address the primary use case and are sufficient for the vast majority of practical applications.
Despite this practical sufficiency, the exact values of the separation radius for arbitrary $N$ remain a natural mathematical question,
which we investigate in Section~\ref{sec:arbitraryN}.

\medskip
\noindent\textbf{Related work.}
In recent years, the quasi-uniformity of QMC point sets has been the subject of intensive investigation. 
There are two major classes of QMC constructions: digital nets and sequences \cite{DP10,N92}, and lattice point sets (including their infinite analogue, Kronecker sequences) \cite{DKP22,SJ94}. 

For lattice point sets, quasi-uniformity has been actively studied. 
In one dimension, the separation radius of Kronecker sequences is completely characterized in \cite{G24b}. 
More general results in higher dimensions are provided in \cite{DGLPSxx}, including 
bounded mesh ratios for two-dimensional Fibonacci lattices, 
existence results for $d$-dimensional lattice rules, 
and explicit constructions for $d$-dimensional Kronecker sequences. 

Turning to digital nets and sequences,
the one-dimensional van der Corput sequence in base $b$ is known to be quasi-uniform since its first $b^m$ points are $\{i/b^m \mid 0 \le i < b^m\}$.
In higher dimensions, the covering radius, often referred to as \emph{dispersion},
has been extensively studied \cite[Chapter~6]{N92}. 
In particular, for $(t,d)$-sequences in base $b$---well-known examples include the Sobol', Faure, and Niederreiter sequences---, the covering radius is known to attain the optimal order $\Theta(N^{-1/d})$ for any dimension $d$.
Thus, the problem of establishing quasi-uniformity reduces to verifying whether the separation radius also scales as $N^{-1/d}$.

For $d=2$, as stated, the Sobol' sequence is not quasi-uniform \cite{G24a}. 
The separation radius of several two-dimensional digital nets was studied in \cite{GHSK08,GK09}. 
Numerical experiments therein suggest that the Larcher--Pillichshammer nets \cite{LP01} are quasi-uniform. 
This was theoretically proved by Dick, Goda, and Suzuki \cite{DGSxx}, 
who introduced an algebraic criterion for \emph{well-separated} digital nets.
To our knowledge, this remains the only explicit construction of low-discrepancy and quasi-uniform digital nets for $d \ge 2$.
The paper \cite{DGSxx} also shows the non-optimality of the separation radius for some two-dimensional digital nets and Fibonacci polynomial lattices,
as well as for $b$-dimensional Faure sequences in prime base $b$. 

From a different perspective, Pronzato and Zhigljavsky \cite{PZ23} constructed quasi-uniform infinite sequences via a greedy packing algorithm, ensuring a mesh ratio of at most $2$ for the first $N$ points, $N \ge 2$. However, these sequences do not necessarily maintain low discrepancy in dimensions $d \ge 2$.

\medskip
\noindent\textbf{Organization.}
Preliminaries and notation are collected in Section~\ref{sec:prelim}. 
Section~\ref{sec:proofs} provides the necessary lemmas, the proof of Theorem~\ref{thm:main}, and the derivation of Corollaries~\ref{cor:net} and~\ref{cor:limsup}.
Results for arbitrary $N$ are given in Section~\ref{sec:arbitraryN}.

\section{Preliminaries}\label{sec:prelim}

\medskip
\noindent\textbf{Notation.}
Throughout this paper, let $\Ftwo$ denote the finite field of order $2$, 
$\NN$ the set of positive integers, and $\NN_0 := \NN \cup \{0\}$.
Addition in $\Ftwo$ or $\Ftwo^m$ is denoted by $\oplus$.
We write $\bszero_m \in \Ftwo^m$ for the zero vector and $\bsone_m \in \Ftwo^m$ for the all-ones vector. 
The subscript $m$ will be omitted whenever it does not cause confusion.

For an integer $0 \le n < 2^m$ with binary expansion
\[
n = n_1 + 2 n_2 + \cdots + 2^{m-1} n_{m},
\]
we define
\[
\vec{n} = (n_1, n_2, \ldots, n_{m})^\top \in \Ftwo^m,
\]
where $n_1, \dots, n_{m} \in \{0,1\}$ are identified with elements of $\Ftwo$.

For a vector $\vec{z} = (z_1, \dots, z_m)^\top \in \Ftwo^m$, 
we denote by $\vec{z}[i] \in \{0,1\}$ its $i$th component $z_i$, 
and for $i<j$ we define the slice $\vec{z}\slice{i}{j} = (z_i, \dots, z_j)^\top$.
Finally, we set
\[
\phi(\vec{z}) = \vec{z}[1] 2^{-1} + \cdots + \vec{z}[m] 2^{-m}.
\]

For a matrix $P = (P_{ij})_{1 \le i,j \le m}$,  
we use the notation $P[i][j] := P_{ij}$ for convenience, and define
\[
P\slice{x}{y}\slice{z}{w}
  := (P[i][j])_{x \le i \le y, \; z \le j \le w},
\]
which represents the submatrix of $P$ consisting of rows $x$ through $y$ and columns $z$ through $w$.

\medskip
\noindent\textbf{Pascal matrix.}
The (upper triangular) Pascal matrix $P_m \in \Ftwo^{m \times m}$ is defined by
\begin{equation}\label{eq:Pascal-def}
P_m[i][j] \equiv \binom{j-1}{i-1} \pmod 2, \quad 1 \le i,j \le m.    
\end{equation}
The subscript $m$ will be omitted whenever it does not cause confusion.

\medskip
\noindent\textbf{Two-dimensional Sobol' sequence.}
The two-dimensional Sobol' sequence is defined as follows. 
For the definition in general dimension $d$, we refer the reader to \cite[Chapter~8]{DP10}.

\begin{definition}\label{def:Sobol}
The two-dimensional Sobol' sequence $(\bsx_n)_{n \in \NN}$ is a sequence of points in $[0,1)^2$, 
where the $n$th point is given by
\[
\bsx_n := (\phi(\vec{n}), \phi(P_m \vec{n})),
\]
where $m \in \NN$ is chosen such that $n < 2^m$.
This definition does not depend on the choice of $m$:
increasing $m$ simply pads $\vec{n}$ with leading zeros,
and since $P_m$ is upper triangular,
$\phi(\vec{n})$ and $\phi(P_m \vec{n})$ remain unchanged.
\end{definition}

Note that some implementations generate Sobol' sequences in Gray code order rather than in the radical inverse order used here.
This difference affects only the ordering of the points; for $N = 2^m$, the resulting point sets are identical.

This construction is generalized to the notions of \textit{digital nets} and \textit{digital sequences}. 
The uniformity of these point sets is usually measured by a quantity called the $t$-value (see \cite[Chapter~4]{N92} or \cite[Chapter~4]{DP10}).
It is known that the $t$-value of the two-dimensional Sobol' sequence attains the best possible value, namely zero \cite[Section~8.1]{DP10}.
This discussion can be formulated rigorously in the following proposition.

\begin{proposition}\label{prop:0ms}
Let $m \in \NN$ and let $Q_{2^m}$ denote the set of the first $2^m$ points of the two-dimensional Sobol' sequence.
Then, for any rectangle of the form
\[
\left[\frac{a}{2^k}, \frac{a+1}{2^k}\right) \times \left[\frac{b}{2^l}, \frac{b+1}{2^l}\right), 
\quad (a,b,k,l \in \NN_0,\, 0 \le a < 2^k,\, 0 \le b < 2^l)
\]
with $k+l = m$, there is exactly one point from $Q_{2^m}$ contained in the rectangle.
\end{proposition}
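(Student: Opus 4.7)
The plan is to translate the containment condition into an $\Ftwo$-linear system for $\vec{n}$ and reduce the proposition to showing that a specific submatrix of $P_m$ is nonsingular.

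\textbf{Translation.} For any $\vec{z} \in \Ftwo^m$, the number $\phi(\vec{z}) \in [0,1)$ lies in $[c/2^r,(c+1)/2^r)$ iff the slice $\vec{z}\slice{1}{r}$ equals a specific vector in $\Ftwo^r$ determined by $c$. Applied coordinatewise to $\bsx_n = (\phi(\vec{n}), \phi(P_m\vec{n}))$, the statement ``$\bsx_n$ lies in the prescribed rectangle'' is equivalent to the system
\[
\vec{n}\slice{1}{k} = \vec{a}, \qquad (P_m \vec{n})\slice{1}{l} = \vec{b},
\]
for some fixed $\vec{a}\in\Ftwo^k$, $\vec{b}\in\Ftwo^l$ determined by $a$ and $b$. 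Since $n \mapsto \vec{n}$ is a bijection $\{0,\dots,2^m-1\} \to \Ftwo^m$, the proposition will follow once I show this linear system has exactly one solution for every $(\vec{a}, \vec{b})$.

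\textbf{Matrix formulation.} Because $k + l = m$, the coefficient matrix is the square matrix
\[
M = \begin{pmatrix} I_k & O_{k \times l} \\ P_m\slice{1}{l}\slice{1}{k} & P_m\slice{1}{l}\slice{k+1}{m} \end{pmatrix},
\]
and the block-triangular determinant formula gives $\det M = \det B$, where $B := P_m\slice{1}{l}\slice{k+1}{m}$ is an $l \times l$ matrix with integer entries $B[i][j] = \binom{k+j-1}{i-1}$ for $1 \le i,j \le l$. So it suffices to show $\det B \neq 0$ in $\Ftwo$, and I will in fact show $\det B = 1$ over $\ZZ$.

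\textbf{Determinant of $B$.} I would prove $\det B = 1$ by induction on $l$. The base case $l=1$ gives $B=(\binom{k}{0})=(1)$. For the inductive step, I would apply the column operations $C_j \leftarrow C_j - C_{j-1}$ for $j = l, l-1, \dots, 2$; by Pascal's rule each entry $\binom{k+j-1}{i-1}$ is replaced by $\binom{k+j-2}{i-2}$ (with the convention $\binom{\cdot}{-1} := 0$). The first row becomes $(1, 0, \dots, 0)$, and Laplace expansion along it leaves an $(l-1)\times(l-1)$ minor with $(i',j')$-entry $\binom{k+j'-1}{i'-1}$ for $1\le i',j'\le l-1$, i.e., exactly the same shape as $B$ with $l$ replaced by $l-1$. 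The inductive hypothesis then yields $\det B = 1$. The only real obstacle is careful index bookkeeping during this column reduction; the argument uses nothing beyond Pascal's identity and the block-triangular determinant formula.
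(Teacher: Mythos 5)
Your proof is correct. Note, though, that the paper does not actually prove Proposition~\ref{prop:0ms}: it treats it as the known statement that the two-dimensional Sobol' sequence is a $(0,2)$-sequence (equivalently, that its $t$-value is zero) and cites \cite[Section~8.1]{DP10}. Your argument is the standard self-contained verification of that fact for digital nets: containment in the dyadic rectangle is equivalent to the $\Ftwo$-linear system $\vec{n}\slice{1}{k}=\vec{a}$, $(P_m\vec{n})\slice{1}{l}=\vec{b}$, and since $k+l=m$ the unique solvability for every right-hand side reduces, via the block-triangular determinant, to the nonsingularity of $B=P_m\slice{1}{l}\slice{k+1}{m}$. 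Your induction showing $\det B=1$ over $\ZZ$ (column differences plus Pascal's rule, with the operations performed from right to left so each subtracted column is still unmodified) is clean and correct; it in fact proves the stronger unimodularity statement, which is what underlies the $t=0$ property in any base. The only cosmetic remark is that the degenerate cases $l=0$ and $k=0$ deserve a sentence ($B$ empty, respectively $B=P_m$ upper unitriangular), but both are immediate. What the paper's route buys is brevity by outsourcing to the literature; what yours buys is a complete, elementary proof using nothing beyond the definition of $P_m$.
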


\section{Proofs}\label{sec:proofs}
\subsection{Lemmas}
We make heavy use of the properties of the Pascal matrix.
In particular, its entries modulo $2$ can be characterized using Lucas's theorem.
Specifically, for integers $0 \le p, q < 2^m$, we have
\begin{equation}\label{eq:Lucas}
\binom{p}{q} \equiv \prod_{i=1}^m \binom{\vec{p}[i]}{\vec{q}[i]} \pmod{2}.
\end{equation}
Using this result, we can establish the following properties.

\begin{lemma}\label{lem:Pascal}
Let $P$ be the Pascal matrix. 
Let $v > w \ge 0$ and $i \ge 0$ be integers, and set $V := 2^v$ and $W := 2^w$. 
Then the following hold:
\begin{enumerate}[label=\textup{(\roman*)}]
\item \label{item:Pascal1}
$P[i][W] = 1 \iff 1 \le i \le W$.

\item \label{item:Pascal11}
$P[i][V+W] = 1 \iff 1 \le i \le W \text{ or } V+1 \le i \le V+W$.

\item \label{item:Pascal12}
For any $\vec{p} \in \Ftwo^m$ with $m \le 2V-1$, 
we have $(P_m \vec{p})[V] = \vec{p}[V]$ and $(P_m \vec{p})[V+W] = \vec{p}[V+W]$.

\item \label{item:Pascal13}
For any $\vec{p} \in \Ftwo^m$ with $m \le 2V-2$, 
we have $(P_m \vec{p})[V-1] = \vec{p}[V-1] \oplus \vec{p}[V]$.

\item \label{item:Pascal2}
$P_V
= P\slice{1}{V}\slice{V+1}{2V}
= P\slice{V+1}{2V}\slice{V+1}{2V}$.


\item \label{item:Pascal4}
$(P_{W} \bsone_W)[i] = 1 \iff i = W$.

\item \label{item:Pascal5}
$(P_{V+W} \bsone_{V+W})[i] = 1 \iff i = W, V \text{ or } V+W$.

\item \label{item:Pascal6}
$(P_{V-1} \bsone_{V-1})[i] = \bsone_{V-1}$.
\end{enumerate}
\end{lemma}

\begin{proof}
Items \ref{item:Pascal1}--\ref{item:Pascal2} follow directly from Lucas's theorem \eqref{eq:Lucas}.

To prove the remaining items, we note that for any $m \in \NN$ and $1 \le i \le m$, 
the hockey-stick identity implies
\[
(P_m \bsone_m)[i]
\equiv \bigoplus_{j=1}^m \binom{j-1}{i-1}
\equiv \binom{m}{i} \pmod{2}.
\]
Using this fact, 
Items \ref{item:Pascal4}--\ref{item:Pascal6}
also follow from Lucas's theorem.
\end{proof}

The following results assert that the binary representations of two close points are related.

\begin{lemma}\label{lem:xor_of_near_vec}
Let $\ell,m,p,q$ be integers with $2 \le \ell \le m$ and $0 \le p,q < 2^m$.
Assume that $0 \le \phi(\vec{q}) - \phi(\vec{p}) < 2^{-\ell+1}$.
Then one of the following holds:
\begin{enumerate}[label=\textup{(\roman*)}]
\item $\vec{p}\slice{1}{\ell-1} = \vec{q}\slice{1}{\ell-1}$.

\item There exists $1 \le k \le \ell-1$ such that all of the following conditions hold:
\begin{enumerate}[label=\textup{(\alph*)}]
    \item $\vec{p}\slice{1}{k-1} = \vec{q}\slice{1}{k-1}$, \label{enu:diff1}
    \item $\vec{p}[k] = 0$, $\vec{q}[k] = 1$, \label{enu:diff2}
    \item $\vec{p}\slice{k+1}{\ell-1} = \bsone$, $\vec{q}\slice{k+1}{\ell-1} = \bszero$, \label{enu:diff3}
    \item $\vec{p}[\ell] \ge \vec{q}[\ell]$, \label{enu:diff4}
    \item $\vec{p}\slice{\ell}{m} \neq \vec{q}\slice{\ell}{m}$. \label{enu:diff5}
\end{enumerate}
\end{enumerate}
\end{lemma}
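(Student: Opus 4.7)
The plan is to unpack the hypothesis $0 \le \phi(\vec{q}) - \phi(\vec{p}) < 2^{-\ell+1}$ directly as a signed binary sum
\[
r \;:=\; \phi(\vec{q}) - \phi(\vec{p}) \;=\; \sum_{i=1}^m (\vec{q}[i] - \vec{p}[i])\,2^{-i},
\]
and to exploit the sharp tail bound $\bigl|\sum_{i=j+1}^{m} (\vec{q}[i]-\vec{p}[i])\,2^{-i}\bigr| \le 2^{-j} - 2^{-m}$. If the first $\ell-1$ coordinates of $\vec{p}$ and $\vec{q}$ already agree, item (i) holds and we are done. Otherwise, let $k \in \{1,\dots,\ell-1\}$ be the smallest index where they differ; condition (a) is immediate.

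To establish (b), note that if $\vec{p}[k]=1$ and $\vec{q}[k]=0$ then $r \le -2^{-k} + (2^{-k} - 2^{-m}) = -2^{-m} < 0$, contradicting $r \ge 0$; hence $\vec{p}[k]=0$ and $\vec{q}[k]=1$. Condition (c) is the crux. I would argue by contradiction on the first index $j^* \in \{k+1,\dots,\ell-1\}$ violating the pattern $(\vec{p}[j^*],\vec{q}[j^*])=(1,0)$, so that $\vec{q}[j^*]-\vec{p}[j^*] \in \{0,+1\}$. For $k \le i \le j^*-1$ the contributions telescope as $2^{-k} - (2^{-k}-2^{-j^*+1}) = 2^{-j^*+1}$, giving
\[
r \;=\; 2^{-j^*+1} + (\vec{q}[j^*]-\vec{p}[j^*])\,2^{-j^*} + T,
\]
where $T$ is the tail from $i=j^*+1$ to $m$, bounded in absolute value by $2^{-j^*}-2^{-m}$. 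Both values of $\vec{q}[j^*]-\vec{p}[j^*]$ yield $r \ge 2^{-j^*} + 2^{-m} \ge 2^{-\ell+1} + 2^{-m}$, contradicting $r < 2^{-\ell+1}$; so (c) must hold.

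With (b) and (c) in place, the same telescoping collapses the prefix $i=1,\dots,\ell-1$ to $2^{-\ell+1}$, so $r = 2^{-\ell+1} + S_\ell$ with $S_\ell := \sum_{i=\ell}^m (\vec{q}[i]-\vec{p}[i])\,2^{-i}$, and the hypothesis translates to $-2^{-\ell+1} \le S_\ell < 0$. The strict upper bound $S_\ell < 0$ forbids $\vec{p}\slice{\ell}{m} = \vec{q}\slice{\ell}{m}$, giving (e). For (d), if instead $\vec{p}[\ell]=0$ and $\vec{q}[\ell]=1$ then $S_\ell \ge 2^{-\ell} - (2^{-\ell}-2^{-m}) = 2^{-m} > 0$, contradicting $S_\ell < 0$; hence $\vec{p}[\ell] \ge \vec{q}[\ell]$.

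The only subtle point is the case analysis inside (c): the contradiction has to remain strict even at the boundary $j^* = \ell-1$, and this is precisely where the $2^{-m}$ slack from the finite length of the expansion is needed to separate $r \ge 2^{-\ell+1} + 2^{-m}$ from $r < 2^{-\ell+1}$.
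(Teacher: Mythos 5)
Your proposal is correct and follows essentially the same route as the paper: locate the first index $k$ where the binary expansions differ, then derive each of (b)--(e) by contradiction from the tail bound $\bigl|\sum_{i>j}(\vec{q}[i]-\vec{p}[i])2^{-i}\bigr|\le 2^{-j}-2^{-m}$, with the $2^{-m}$ slack supplying the strict contradiction against $r<2^{-\ell+1}$. The paper's proof of (c) uses an arbitrary violating index rather than the first one and omits the (d), (e) arguments as ``similar,'' which your telescoped identity $r = 2^{-\ell+1}+S_\ell$ fills in cleanly, but this is only a cosmetic difference.
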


\begin{proof}
We assume that $\vec{p}\slice{1}{\ell-1} \neq \vec{q}\slice{1}{\ell-1}$,
since otherwise there is nothing to prove.
Let $k$ be the smallest index with $1 \le k \le \ell-1$ such that $\vec{p}[k] \neq \vec{q}[k]$.
Since we have assumed that $\phi(\vec{p}) < \phi(\vec{q})$,
it follows that $\vec{p}[k] = 0$ and $\vec{q}[k] = 1$.
By the minimality of $k$, we also have $\vec{p}\slice{1}{k-1} = \vec{q}\slice{1}{k-1}$.
Thus, \ref{enu:diff1} and \ref{enu:diff2} are established.

We now prove \ref{enu:diff3} by contradiction.
Assume that there exists $k < k' \le \ell-1$ such that 
$\vec{p}[k'] = 0$ or $\vec{q}[k'] = 1$.
Then, using \ref{enu:diff1} and \ref{enu:diff2}, we have
\begin{align*}
\phi(\vec{q}) - \phi(\vec{p})
= 2^{-k} + \sum_{i=k+1}^m (\vec{q}[i] - \vec{p}[i])2^{-i}
\ge 2^{-k} + 2^{-k'} - \sum_{i=k+1}^m 2^{-i}
= 2^{-k'} + 2^{-m},
\end{align*}
which contradicts the assumption that $\phi(\vec{q}) - \phi(\vec{p}) < 2^{-\ell+1}$.

The proofs of \ref{enu:diff4} and \ref{enu:diff5} are similar to that of \ref{enu:diff3} and are omitted.
\end{proof}

In particular, this lemma implies the following corollary.

\begin{corollary}\label{cor:01vec}
Let $\ell,m,p,q$ be integers with $2 \le \ell \le m$ and
$0 \le p \neq q < 2^m$, and assume that
$|\phi(\vec{q}) - \phi(\vec{p})| < 2^{-\ell+1}$.
Then the following statements hold:

\begin{enumerate}[label=\textup{(\roman*)}]
\item \label{item:01vec-1}
The vector $(\vec{p}\oplus\vec{q})\slice{1}{\ell-1}$ is either
$\bszero$, $\bsone$, or of the form $(0,\dots,0,1,\dots,1)^\top$.

\item \label{item:01vec-2}
If there exists an integer $2 \le k \le \ell-1$ such that
$(\vec{p} \oplus \vec{q})[k-1] = 0$ and $(\vec{p} \oplus \vec{q})[k] = 1$, 
then
\[
(\vec{p} \oplus \vec{q})\slice{1}{k-1} = \bszero, \quad
(\vec{p} \oplus \vec{q})\slice{k}{\ell-1} = \bsone, \quad
(\vec{p} \oplus \vec{q})\slice{\ell}{m} \neq \bszero.
\]
Moreover, one of the following holds:
\begin{itemize}
\item $\phi(\vec{p}) > \phi(\vec{q})$, 
      $\vec{p}[k] = 1$, $\vec{q}[k] = 0$, 
      $\vec{p}\slice{k+1}{\ell-1} = \bszero$, $\vec{q}\slice{k+1}{\ell-1} = \bsone$, 
      $\vec{p}[\ell] \le \vec{q}[\ell]$;
\item $\phi(\vec{p}) < \phi(\vec{q})$, 
      $\vec{p}[k] = 0$, $\vec{q}[k] = 1$, 
      $\vec{p}\slice{k+1}{\ell-1} = \bsone$, $\vec{q}\slice{k+1}{\ell-1} = \bszero$, 
      $\vec{p}[\ell] \ge \vec{q}[\ell]$.
\end{itemize}

\item \label{item:01vec-3}
If there exists an integer $2 \le k \le \ell-1$ such that
$(\vec{p} \oplus \vec{q})[k-1] = (\vec{p} \oplus \vec{q})[k] = 1$, 
then
\[
(\vec{p}\oplus \vec{q})\slice{k}{\ell-1} = \bsone, \quad
(\vec{p} \oplus \vec{q})\slice{\ell}{m} \neq \bszero.
\]
Moreover, one of the following holds:
\begin{itemize}
\item $\phi(\vec{p}) > \phi(\vec{q})$, 
      $\vec{p}\slice{k}{\ell-1} = \bszero$, $\vec{q}\slice{k}{\ell-1} = \bsone$, 
      $\vec{p}[\ell] \le \vec{q}[\ell]$;
\item $\phi(\vec{p}) < \phi(\vec{q})$, 
      $\vec{p}\slice{k}{\ell-1} = \bsone$, $\vec{q}\slice{k}{\ell-1} = \bszero$, 
      $\vec{p}[\ell] \ge \vec{q}[\ell]$.
\end{itemize}
\end{enumerate}
\end{corollary}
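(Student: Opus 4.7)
The plan is to derive each part of the corollary from Lemma~\ref{lem:xor_of_near_vec} by first symmetrising the roles of $p$ and $q$, and then reading off the XOR pattern guaranteed by the lemma. Since $p \neq q$ implies $\phi(\vec{p}) \neq \phi(\vec{q})$, after possibly swapping $p$ and $q$ I may assume $\phi(\vec{p}) < \phi(\vec{q})$; the two bulleted alternatives in parts~\ref{item:01vec-2} and~\ref{item:01vec-3} then correspond to the two orderings.

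For part~\ref{item:01vec-1}, the two cases of the lemma yield either $(\vec{p}\oplus\vec{q})\slice{1}{\ell-1} = \bszero$ or, for some transition index $k \in \{1,\dots,\ell-1\}$, a vector with zeros on $\slice{1}{k-1}$ and ones on $\slice{k}{\ell-1}$. The extreme value $k=1$ produces $\bsone$, while $k \ge 2$ produces the intermediate shape $(0,\dots,0,1,\dots,1)^\top$, covering all three listed possibilities.

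For part~\ref{item:01vec-2}, the assumption $(\vec{p}\oplus\vec{q})[k-1]=0$ and $(\vec{p}\oplus\vec{q})[k]=1$ rules out the first case of the lemma and, combined with the shape from part~\ref{item:01vec-1}, forces the lemma's transition index to equal the given $k$. Conditions (a)--(e) of the lemma then translate verbatim into the stated equalities on $\vec{p}$, $\vec{q}$, and $(\vec{p}\oplus\vec{q})\slice{\ell}{m}$. For part~\ref{item:01vec-3}, the hypothesis $(\vec{p}\oplus\vec{q})[k-1] = (\vec{p}\oplus\vec{q})[k] = 1$ likewise rules out case~(i) and forces the lemma's transition index $k^\ast$ to satisfy $k^\ast \le k-1$; consequently positions $k,\dots,\ell-1$ lie entirely within the tail controlled by condition~(c), yielding $\vec{p}\slice{k}{\ell-1} = \bsone$ and $\vec{q}\slice{k}{\ell-1} = \bszero$, while conditions~(d) and~(e) supply the remaining claims.

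The argument is essentially bookkeeping once Lemma~\ref{lem:xor_of_near_vec} is in hand; the only mild care needed is handling the boundary value $k=1$ in part~\ref{item:01vec-1} and correctly matching the corollary's index $k$ with the lemma's transition index in parts~\ref{item:01vec-2} and~\ref{item:01vec-3}, both of which are immediately pinned down by the $(0,\dots,0,1,\dots,1)^\top$ shape established in part~\ref{item:01vec-1}.
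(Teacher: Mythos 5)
Your proposal is correct and matches the paper's (implicit) argument: the paper presents this corollary as an immediate consequence of Lemma~\ref{lem:xor_of_near_vec} without further proof, and your derivation---reducing to $\phi(\vec{p})<\phi(\vec{q})$ by symmetry, reading off the $(0,\dots,0,1,\dots,1)^\top$ shape from conditions (a)--(c), and locating the transition index relative to the given $k$ in parts \ref{item:01vec-2} and \ref{item:01vec-3}---is exactly the intended bookkeeping. No gaps.
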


\subsection{Proof of the main theorem}\label{sec:proof-of-main}

\begin{itemize}
\item
The case $m = 2^v - 1$ is essentially treated in \cite{G24a}.
As shown in the proof of \cite[Theorem~2.2]{G24a}, we have
\[
\|\bsx_{1} - \bsx_{2^m-2} \|_\infty = 2^{-m}.
\]
On the other hand, by the construction of the Sobol' sequence,
$\|\bsx_{p} - \bsx_{q} \|_\infty \ge 2^{-m}$ holds for any $p \neq q$.
Combining these observations, we conclude that
\[
q_\infty(Q_{2^m}) = 2^{-m-1}.
\]

\item
The case $m = 2^v$ is essentially treated in \cite{DGSxx}.
From the proof of \cite[Theorem~4.2]{DGSxx} with $L$ being the identity matrix, we deduce
$\bsx_{2^{m-1}+1} = (1/2 + 1/2^m, 1/2 - 1/2^m)$,
which implies
$\|\bsx_{1} - \bsx_{2^{m-1}+1} \|_\infty = 2^{-m}$.
Hence, in the same manner as the previous case, we obtain
\[
q_\infty(Q_{2^m}) = 2^{-m-1}.
\]

\item
For the remaining cases, we exclude $m = 2^v$ or $m = 2^v - 1$ for any $v \in \NN$.
We may then write
\[
m = 2^v + 2^w + c, \quad v \ge 2, \quad v > w, \quad 2^w > c \ge 0,
\]
and set $V = 2^v$, $W = 2^w$.
Since $m+1$ is not a power of two, we have $m \le 2V-2$ and can thus apply
Lemma~\ref{lem:Pascal} \ref{item:Pascal12} and \ref{item:Pascal13}.
In the following, we will establish separately that
\[
q_\infty(Q_{2^m}) \le 2^{-V-W} \quad \text{and} \quad 2^{-V-W} \le q_\infty(Q_{2^m}).
\]
\end{itemize}

\subsubsection{Proof of the upper bound}\label{sec:proof-upper}

Let $p = 2^{V+W-1} + 2^{V-1}$ and $q = 2^{V+W} - 2^{V}$.
To prove $q_\infty(Q_{2^m}) \le 2^{-V-W}$, it suffices to show that
$\|\bsx_p - \bsx_q\|_\infty = 2^{-V-W+1}$.

First, we compute $\bsx_p = (x_{p,1}, x_{p,2})$.
Since $\vec{p}[i] = 1$ if and only if $i=V$ or $i=V+W$, it follows from Lemma~\ref{lem:Pascal} \ref{item:Pascal1} and \ref{item:Pascal11} that
\begin{align*}
x_{p,1} &= \phi(\vec{p}) = 2^{-V} + 2^{-V-W},\\
x_{p,2} &= \phi(P_m \vec{p}) = \sum_{i=1}^{V+W} (P[i][V] \oplus P[i][V+W])2^{-i}
= \sum_{i=W+1}^{V+W} 2^{-i}
= 2^{-W} - 2^{-V-W}.
\end{align*}

Next, we compute $\bsx_q = (x_{q,1}, x_{q,2})$.
Since $\vec{q}[i] = 1$ for $V+1 \le i \le V+W$, we have
\begin{align*}
P_m \vec{q}
&= P_m (\mathrm{concat}(\bsone_{V}, \bszero_{m-V}) \oplus \mathrm{concat}(\bsone_{V+W}, \bszero_{m-V-W}))\\
&= \mathrm{concat}(P_{V} \bsone_{V}, \bszero_{m-V}) \oplus \mathrm{concat}(P_{V+W} \bsone_{V+W}, \bszero_{m-V-W}),
\end{align*}
where ``concat'' denotes vertical concatenation of column vectors.
Thus, by Lemma~\ref{lem:Pascal} \ref{item:Pascal4} and \ref{item:Pascal5}, we obtain
\begin{align*}
x_{q,1} &= \phi(\vec{q}) = \sum_{i=V+1}^{V+W} 2^{-i} = 2^{-V} - 2^{-V-W},\\
x_{q,2} &= \phi(P_m \vec{q}) = 2^{-W} + 2^{-V-W}.
\end{align*}

Hence,
\[
|x_{p,1} - x_{q,1}| = |x_{p,2} - x_{q,2}| = 2^{-V-W+1},
\]
which gives the desired result. \qed

\subsubsection{Proof of the lower bound}\label{sec:proof-lower}

We prove the bound by contradiction. 
Assume that there exist integers $p, q$ with $0 \le p \neq q < 2^m$ such that
$\phi(\vec{p}) < \phi(\vec{q})$ and 
$\|\bsx_p - \bsx_q\|_\infty < 2^{-V-W+1}$.
Let $\vec{\Delta} := \vec{p} \oplus \vec{q}$.
We divide the analysis into three cases according to the values of $\vec{\Delta}[V]$ and $\vec{\Delta}[V-1]$.

\medskip
\noindent\textbf{Case 1: $\vec{\Delta}[V] = 0$.}

\noindent
In this case, Lemma~\ref{lem:Pascal} \ref{item:Pascal12} gives
\[
(P\vec{\Delta})[V] = \vec{\Delta}[V] = 0.
\]
Then, Corollary~\ref{cor:01vec} \ref{item:01vec-1} with $\ell=V+1$ implies
\[
\vec{\Delta}\slice{1}{V} = (P\vec{\Delta})\slice{1}{V} = \bszero.
\]
Hence, $\bsx_{p}$ and $\bsx_{q}$ lie in the same interval of the form
$[a/2^V,(a+1)/2^V) \times [b/2^V,(b+1)/2^V)$
for some $a,b$ with $0 \le a,b < 2^V$.
By Proposition~\ref{prop:0ms}, this forces $\bsx_p = \bsx_q$, contradicting the assumption $p \neq q$.

\medskip
\noindent\textbf{Case 2: $\vec{\Delta}[V] = 1$ and $\vec{\Delta}[V-1] = 0$.}

\noindent
In this case, Corollary~\ref{cor:01vec} \ref{item:01vec-2} with $k=V$ and $\ell=V+W$, together with the assumption $\phi(\vec{p}) < \phi(\vec{q})$, implies
\begin{align}
\vec{\Delta}\slice{V}{V+W-1} = \bsone, \label{eq:case2-1}\\
\vec{\Delta}\slice{V+W}{m} \neq \bszero, \label{eq:case2-2}\\
\vec{p}[V] = 0, \quad \vec{q}[V] = 1, \label{eq:case2-3}\\
\vec{p}[V+W] \ge \vec{q}[V+W]. \label{eq:case2-4}
\end{align}
By Lemma~\ref{lem:Pascal} \ref{item:Pascal12} and \eqref{eq:case2-3}, we have
\[
(P\vec{p})[V] = \vec{p}[V] = 0, \quad
(P\vec{q})[V] = \vec{q}[V] = 1, \quad
(P\vec{\Delta})[V] = \vec{\Delta}[V] = 1,
\]
and from Lemma~\ref{lem:Pascal} \ref{item:Pascal13}, 
\[
(P\vec{\Delta})[V-1] = \vec{\Delta}[V-1] \oplus \vec{\Delta}[V] = 1.
\]
Hence, since $(P\vec{p})[V] = 0$ holds, the first alternative of Corollary~\ref{cor:01vec} \ref{item:01vec-3} gives
\begin{align}
(P\vec{\Delta})\slice{V}{V+W-1} &= \bsone, \label{eq:case2-5}\\
(P\vec{p})[V+W] &\le (P\vec{q})[V+W]. \label{eq:case2-6}
\end{align}

We further divide the analysis into the following two subcases.

\medskip
\noindent\textbf{Case 2-1: $\vec{\Delta}[V+W] = 0$.}

\noindent
In this case, using \eqref{eq:case2-5}, \eqref{eq:case2-1}, Lemma~\ref{lem:Pascal} \ref{item:Pascal2}, and Lemma~\ref{lem:Pascal} \ref{item:Pascal6}, we have
\begin{align*}
\bsone
&= (P\vec{\Delta})\slice{V+1}{V+W-1} \\
&= P\slice{V+1}{V+W-1}\slice{V+1}{V+W-1} \cdot \vec{\Delta}\slice{V+1}{V+W-1} \\
&\quad \oplus P\slice{V+1}{V+W-1}\slice{V+W}{V+W} \cdot \vec{\Delta}[V+W] \\
&\quad \oplus P\slice{V+1}{V+W-1}\slice{V+W+1}{m} \cdot \vec{\Delta}\slice{V+W+1}{m} \\
&= P_{W-1} \bsone \oplus \bszero \oplus P' \cdot \vec{\Delta}\slice{V+W+1}{m} \\
&= \bsone \oplus P' \cdot \vec{\Delta}\slice{V+W+1}{m}.
\end{align*}
where $P' := P\slice{V+1}{V+W-1}\slice{V+W+1}{m}$.
This gives
\begin{equation} \label{eq:P'Delta=0}
P' \cdot \vec{\Delta}\slice{V+W+1}{m} = \bszero.    
\end{equation}
From Lemma~\ref{lem:Pascal} \ref{item:Pascal2}, we have
\begin{align*}
P'
= P\slice{V+1}{V+W-1}\slice{V+W+1}{m}
&= P\slice{1}{W-1}\slice{W+1}{m-V}\\
&= P\slice{1}{W-1}\slice{1}{m-V-W}.    
\end{align*}
Since $P_{m-V-W}$ is non-singular and $W-1 \ge m-V-W$, the columns of $P'$ are linearly independent.
Hence, \eqref{eq:P'Delta=0} implies
\[
\vec{\Delta}\slice{V+W+1}{m} = \bszero,
\]
which together with the assumption $\vec{\Delta}[V+W] = 0$ contradicts \eqref{eq:case2-2}.

\medskip
\noindent\textbf{Case 2-2: $\vec{\Delta}[V+W] = 1$.}

\noindent
In this case, \eqref{eq:case2-4} implies that $\vec{p}[V+W] = 1$ and $\vec{q}[V+W] = 0$.
Hence, Lemma~\ref{lem:Pascal} \ref{item:Pascal12} implies
\[
(P\vec{p})[V+W] = \vec{p}[V+W] = 1, \quad \text{and}
\quad (P\vec{q})[V+W] = \vec{q}[V+W] = 0,
\]
which contradicts \eqref{eq:case2-6}.

\medskip
\noindent\textbf{Case 3: $\vec{\Delta}[V] = 1$ and $\vec{\Delta}[V-1] = 1$.}

\noindent
In this case, Corollary~\ref{cor:01vec} \ref{item:01vec-3}, combined with the assumption $\phi(\vec{p}) < \phi(\vec{q})$, implies
\begin{align}
\vec{\Delta}\slice{V}{V+W-1} = \bsone, \label{eq:case3-1}\\
\vec{\Delta}\slice{V+W}{m} \neq \bszero, \label{eq:case3-2}\\
\vec{p}[V] = 1, \vec{q}[V] = 0, \label{eq:case3-3}\\
\vec{p}[V+W] \ge \vec{q}[V+W]. \label{eq:case3-4}
\end{align}
By Lemma~\ref{lem:Pascal} \ref{item:Pascal12} and \eqref{eq:case3-3}, we have
\[
(P\vec{p})[V] = \vec{p}[V] = 1, \quad
(P\vec{q})[V] = \vec{q}[V] = 0, \quad
(P\vec{\Delta})[V] = \vec{\Delta}[V] = 1,
\]
and from Lemma~\ref{lem:Pascal} \ref{item:Pascal13},
\[
(P\vec{\Delta})[V-1] = \vec{\Delta}[V-1] \oplus \vec{\Delta}[V] = 0.
\]
Hence, since $(P\vec{p})[V] = 1$ holds, the first alternative of Corollary~\ref{cor:01vec} \ref{item:01vec-2} gives
\begin{align}
(P\vec{\Delta})\slice{V}{V+W-1} &= \bsone, \label{eq:case3-5}\\
(P\vec{p})[V+W] &\le (P\vec{q})[V+W]. \label{eq:case3-6}
\end{align}

We now split the analysis into the following two subcases.

\medskip
\noindent\textbf{Case 3-1: $\vec{\Delta}[V+W] = 0$.}

\noindent
In this case, in the same way as in the proof of Case 2-1, \eqref{eq:case3-1} and \eqref{eq:case3-5} imply
\[
\vec{\Delta}\slice{V+W+1}{m} = \bszero.
\]
This, together with the assumption $\vec{\Delta}[V+W] = 0$, contradicts \eqref{eq:case3-2}.

\medskip
\noindent\textbf{Case 3-2: $\vec{\Delta}[V+W] = 1$.}

\noindent
Here, \eqref{eq:case3-4} implies $\vec{p}[V+W] = 1$ and $\vec{q}[V+W] = 0$.
Hence, Lemma~\ref{lem:Pascal} \ref{item:Pascal12} implies
\[
(P\vec{p})[V+W] = \vec{p}[V+W] = 1, \quad \text{and}
\quad (P\vec{q})[V+W] = \vec{q}[V+W] = 0,
\]
which contradicts \eqref{eq:case3-6}.

\medskip
The proof is therefore complete in all cases. \qed

\subsection{Proof of Corollary~\ref{cor:net}}
The cases $m = 1$ and $m = 5$ hold individually, as in Theorem~\ref{thm:main}.

If $m = 2^v$ or $2^v-1$ for some $v \in \NN$ and $m \neq 1$,
then Theorem~\ref{thm:main} gives $q_\infty(Q_{2^m}) = 2^{-m-1}$, and hence
\[
2^{3m/4}q_\infty(Q_{2^m}) = 2^{-m/4-1} \le 2^{-3/2}.
\]

Otherwise, write $m = 2^v + 2^w + c$ with $v > w$ and $2^w > c \ge 0$.
Then Theorem~\ref{thm:main} gives $q_\infty(Q_{2^m}) = 2^{-2^v - 2^w}$.

First, consider $w \le v-2$. Since $m \neq 5$, we have $v \ge 3$. 
Using $c \le 2^w-1$, we obtain
\[
\log_2(2^{3m/4}q_\infty(Q_{2^m}))
= \frac{3}{4}(2^v + 2^w + c) - 2^v - 2^w 
\le -\frac{2^v}{4} + \frac{2^w}{2} - \frac{3}{4}.
\]
Further, using $w \le v-2$, we have $2^w/2 \le 2^{v-3} = 2^v/8$, so that
\[
\log_2(2^{3m/4}q_\infty(Q_{2^m})) 
\le -\frac{2^v}{4} + \frac{2^v}{8} - \frac{3}{4} 
= -\frac{2^v}{8} - \frac{3}{4} < -\frac{3}{2}.
\]

Next, consider $w = v-1$. In this case, $c \le 2^w-2$; otherwise $m+1$ would be a power of two. Then
\begin{align*}
\log_2(2^{3m/4}q_\infty(Q_{2^m}))
&= \frac{3}{4}(2^v + 2^w + c) - 2^v - 2^w \\
&\le \frac{3}{4}(2^{w+1} + 2^w + 2^w - 2) - 2^{w+1} - 2^w \\
&= -\frac{3}{2},
\end{align*}
with equality if $c = 2^w-2$.

This completes the proof in all cases. \qed

\subsection{Proof of Corollary~\ref{cor:limsup}}
To prove \eqref{eq:limsup}, let $N \ge 2$ and choose $m \in \NN$ such that $2^m \le N \le 2^{m+1}$.
Since $q_\infty(Q_N)$ is non-increasing in $N$, \eqref{eq:cor-net-general} gives
\[
N^{3/4} q_\infty(Q_N)
\le (2^{m+1})^{3/4} q_\infty(Q_{2^m})
\le (2^{m+1})^{3/4} \cdot 2^{-3m/4-5/4}
= 2^{-1/2}.
\]
This proves \eqref{eq:limsup} for general $N$.

If $N \ge 64$, then $m \ge 6$, and we can use \eqref{eq:cor-net-strong} instead of \eqref{eq:cor-net-general}; 
the same analysis then gives the improved constants for $q_\infty(Q_N)$.

Finally, \eqref{eq:meshratio} follows immediately from \eqref{eq:limsup} together with the general bound
$h_\infty(Q_N) \ge 1/(2\sqrt{N})$
as given in \cite[Remark~2.4]{DGSxx}.
\qed

\section{Separation radius for general \texorpdfstring{$N$}{N}}\label{sec:arbitraryN}

Building on our previous result for the exact separation radius $q_\infty(Q_{2^m})$ for a power-of-two number of points, this section establishes an upper bound for $q_\infty(Q_{N})$
for an arbitrary number of points $N$.
It is important to note that the analysis herein applies exclusively to the Sobol' sequence in radical inverse order (Definition~\ref{def:Sobol}).
To motivate our theoretical result, we first conducted a numerical experiment to identify the pairs of indices $(p,q)$ that successively update the minimum distance $\|\bsx_p - \bsx_q\|_\infty$ for $q \le 2^{15}$.
Note that, since the Sobol' sequence starts from index~$0$, the point set corresponding to $(p,q)$ consists of $N=q+1$ points.
We recall that the separation radius is the half of the minimum distance.
The results of this search, summarized in Table~\ref{table:radius-update}, motivate the following theorem, which provides an analytical result consistent with our numerical observations.

\begin{table}[b]
  \centering
  \caption{Indices $(p,q)$ for which the distance $\|\bsx_p - \bsx_q\|_\infty$ successively achieves the minimum distance for the Sobol' sequence, shown for $q \le 2^{15}$. The fourth row indicates the corresponding case in Theorem~\ref{thm:decrease_candidates} that describes the pair. The last two rows show the corresponding number of points $N=q+1$ and the separation radius $q_\infty(Q_N) = \|\bsx_p - \bsx_q\|_\infty/2$, respectively.}
\label{table:radius-update}
\begin{tabular}{r|ccccccccccccc}
$p$ & $0$ & $1$ & $2$ & $1$ & $24$ & $40$ & $8$ & $1$ & $504$ & $640$ & $2176$ & $128$ & \\
\hline
$q$ & $1$ & $2$ & $4$ & $9$ & $36$ & $48$ & $112$ & $129$ & $516$ & $768$ & $3840$ & $32512$ & \\
\hline
$\|\bsx_p - \bsx_q\|_\infty$ & $2^{-1}$ & $2^{-2}$ & $2^{-3}$ & $2^{-4}$ & $3 \cdot 2^{-6}$ & $2^{-5}$ & $2^{-7}$ & $2^{-8}$ & $3 \cdot 2^{-10}$ & $2^{-9}$ & $2^{-11}$ & $2^{-15}$ & \\
\hline
Thm.~\ref{thm:decrease_candidates} & --- & --- &\ref{item:dc2}&\ref{item:dc1}&\ref{item:dc3}&\ref{item:dc4}&\ref{item:dc2}&\ref{item:dc1}&\ref{item:dc3}&\ref{item:dc4}&\ref{item:dc4}&\ref{item:dc2}\\
\hline
$N$ & $2$ & $3$ & $5$ & $10$ & $37$ & $49$ & $113$ & $130$ & $517$ & $769$ & $3841$ & $32513$ & \\
\hline
$q_\infty(Q_N)$ & $2^{-2}$ & $2^{-3}$ & $2^{-4}$ & $2^{-5}$ & $3 \cdot 2^{-7}$ & $2^{-6}$ & $2^{-8}$ & $2^{-9}$ & $3 \cdot 2^{-11}$ & $2^{-10}$ & $2^{-12}$ & $2^{-16}$ & \\
\end{tabular}

\end{table}

\begin{theorem} \label{thm:decrease_candidates}
Let $Q = \{\bsx_0, \bsx_1, \dots \}$ be the two-dimensional Sobol' sequence and $Q_N$ be its first $N$ points. 
Let $v > w \ge 1$ be integers.
Then we have the following.
\begin{enumerate}[label=\textup{(\roman*)}]
\item \label{item:dc1}
\textbf{Case where $m=2^v$:}
Let $p=1$, $q=2^{2^v-1}+1$, and $N=q+1$.
Then 
\[q_\infty(Q_N) \le \dfrac{1}{2}\|\bsx_p - \bsx_q\|_\infty = 2^{-2^v-1}\]

\item \label{item:dc2}
\textbf{Case where $m=2^v-1$:}
Let $p=2^{2^{v-1}-1}$, $q=2^{2^v-1} - 2^{2^{v-1}}$, and $N=q+1$.
Then
\[q_\infty(Q_N) \le \dfrac{1}{2}\|\bsx_p - \bsx_q\|_\infty = 2^{-2^v}.\]

\item \label{item:dc3}
\textbf{Case where $m=2^v+2$:}
Let $p=2^{2^v+1}-8$, $q=2^{2^v+1}+4$, and $N=q+1$.
Then
\[q_\infty(Q_N) \le \dfrac{1}{2}\|\bsx_p - \bsx_q\|_\infty = 3 \cdot 2^{-2^v-3}\]

\item \label{item:dc4}
\textbf{Case where $m=2^v+2^w$:}
Let $p=2^{2^v+2^w-1}+2^{2^v-1}$, $q=2^{2^v+2^w} - 2^{2^v}$, and $N=q+1$.
Then
\[q_\infty(Q_N) \le \dfrac{1}{2}\|\bsx_p - \bsx_q\|_\infty = 2^{-2^v-2^w}.\]
\end{enumerate}
\end{theorem}

\begin{proof}

In this proof, we use the notation $V = 2^v$ and $W =2^w$
as in Section~\ref{sec:proofs}.

\ref{item:dc1}:
This is shown in the second case of Section~\ref{sec:proof-of-main}.

\ref{item:dc2}:
Let $V' = 2^{v-1}$.
First, we compute $\bsx_p = (x_{p,1}, x_{p,2})$.
Since $\vec{p}[i] = 1$ if and only if $i = V'$ ,
it follows from Lemma~\ref{lem:Pascal} \ref{item:Pascal1} that
\begin{align*}
x_{p,1} &= \phi(\vec{p}) = 2^{-V'}, \\
x_{p,2} &= \phi(P_{V-1} \vec{p})
= \sum_{i=1}^{V'} P[i][V'] 2^{-i}
= \sum_{i=1}^{V'} 2^{-i}
= 1 - 2^{-V'}.
\end{align*}

Next, we compute $\bsx_q = (x_{q,1}, x_{q,2})$.
In what follows, ``concat'' denotes vertical concatenation of column vectors.
Since $\vec{q}[i] = 1$ for $V'+1 \le i \le V-1$, we have
\begin{align*}
P_{V-1} \vec{q}
&= P_{V-1} (\bsone_{V-1} \oplus \mathrm{concat}(\bsone_{V'}, \bszero_{V-V'-1}))\\
&= P_{V-1}\bsone_{V-1} \oplus \mathrm{concat}(P_{V'}\bsone_{V'}, \bszero_{V-V'-1}))\\
&= \bsone_{V-1} \oplus \text{(the vector whose $V'$-th component is $1$ and all others are $0$)}\\
&= \mathrm{concat}(\bsone_{V'-1}, \bszero_1, \bsone_{V-V'-1}),
\end{align*}
where the third equality follows from
Lemma~\ref{lem:Pascal} \ref{item:Pascal4}
and \ref{item:Pascal6}.
Thus we obtain
\begin{align*}
x_{q,1} &= \phi(\vec{q})
= \sum_{i=V'+1}^{V-1} 2^{-i} = 2^{-V'} - 2^{-V+1}\\
x_{q,2} &= \phi(P_{V-1} \vec{q})
= \sum_{i=1}^{V-1} 2^{-i} - 2^{-V'} 
= 1 - 2^{-V'} - 2^{-V+1}.
\end{align*}
Hence $\|\bsx_p - \bsx_q\| = 2^{-V+1}$.

\ref{item:dc3}:
First, we compute $\bsx_p = (x_{p,1}, x_{p,2})$.
We have $\vec{p}[i] = 1$ for $4 \le i \le V+1$.
From Lemma~\ref{lem:Pascal} \ref{item:Pascal5} with $(V,W) = (2,1)$ and $(V,1)$,
we have
\begin{align*}
(P_3 \bsone_{3}) [i] &= 1 \iff i = 1,2,3\\
(P_{V+1} \bsone_{V+1}) [i] &= 1 \iff i = 1,V,V+1.
\end{align*}
Since we have assumed that $v \ge 2$ and thus $V \ge 4$, we have
\begin{align*}
(P_{V+2} \vec{p})[i] = 1 \iff i=2,3,V,V+1,
\end{align*}
and thus
\begin{align*}
x_{p,1} &= \phi(\vec{p}) = \sum_{i=4}^{V+1}2^{-i} = 2^{-3} - 2^{-V-1}, \\
x_{p,2} &= \phi(P_{V+2} \vec{p})
= 2^{-2} + 2^{-3} + 2^{-V} + 2^{-V-1}.
\end{align*}

Next, we compute $\bsx_q = (x_{q,1}, x_{q,2})$.
Since $\vec{q}[i] = 1$ if and only if $i=3$ or $i=V+2$,
it follows from Lemma~\ref{lem:Pascal} \ref{item:Pascal11} with $(V,W) = (2,1)$ and $(V,2)$ that
\begin{align*}
x_{q,1} &= \phi(\vec{q})
= 2^{-3} + 2^{-V-2}\\
x_{q,2} &= \phi(P_{V+2} \vec{q})
= \sum_{i=1}^{V+2} (P[i][3] \oplus P[i][V+2]) 2^{-i}
= 2^{-2} + 2^{-3} + 2^{-V-1} + 2^{-V-2}.
\end{align*}
Hence $\|\bsx_p - \bsx_q\| = 3 \cdot 2^{-V-2}$.

\ref{item:dc4}: This is shown in Section~\ref{sec:proof-upper}.
\end{proof}

Establishing a lower bound on the separation radius or minimum distance for general $N$ would necessitate a significantly more complex argument than the one used for the proof in Section~\ref{sec:proof-lower}.
We therefore propose the conjecture that, for $q \ge 4$, a pair $(\bsx_p, \bsx_q)$ updates the minimum distance if and only if it is one of the candidates specified in Theorem~\ref{thm:decrease_candidates}.

\section{Conclusion}
We established a complete characterization of the separation radius for the case $N=2^m$ and investigated its numerical behavior for general $N$.
Our analysis reveals that the two-dimensional Sobol' sequence has a suboptimal mesh ratio that grows at least as 
$N^{1/4}$.
While digital scrambling may alleviate the poor growth of the separation radius, examining this possibility is left for future work.

In sharp contrast to the two-dimensional case (Theorem~\ref{thm:main}), Sobol' points in dimensions $d \ge 3$ did not exhibit a clear systematic pattern in their separation radius.
For $d=3$, in particular, the observed rate lies between 
$N^{-1/3}$ and $N^{-1/2}$ up to $N \le 2^{15}$.
Understanding the behavior in higher dimensions and establishing dimension-dependent bounds remain important directions for future research.

\section*{Acknowledgments}
The author acknowledge the use of OpenAI's ChatGPT 5 and Google's Gemini 2.5 for editing and language polishing of the manuscript.

\bibliographystyle{amsplain}
\bibliography{ref.bib}

\end{document}